\newtheorem{theorem}{Theorem}
\theoremstyle{definition}
\theoremstyle{plain}
\newtheorem{proposition}{Proposition}
\newcommand\R{\mathbb R}
\newcommand\diag{\mathrm{diag}}
\newcommand\Var{\mathrm{Var}}
\newcommand\Cov{\mathrm{Cov}}
\title{$E$- and $R$-optimality of block designs for treatment-control comparisons}
\author{Samuel Rosa}
\affil{Faculty of Mathematics, Physics and Informatics, Comenius University, Bratislava, Slovakia}
\date{\today} 
\begin{document}
	
\maketitle

\begin{abstract}
	We study optimal block designs for comparing a set of test treatments with a control treatment. We provide the class of all $E$-optimal approximate block designs characterized by simple linear constraints. Employing this characterization, we obtain a class of $E$-optimal exact designs for treatment-control comparisons for unequal block sizes. In the studied model, we justify the use of $E$-optimality by providing a statistical interpretation for all $E$-optimal approximate designs and for the known classes of $E$-optimal exact designs. Moreover, we consider the $R$-optimality criterion, which minimizes the volume of the rectangular confidence region based on the Bonferroni confidence intervals. We show that all approximate $A$-optimal designs and a large class of $A$-optimal exact designs for treatment-control comparisons are also $R$-optimal. This further reinforces the observation that $A$-optimal designs perform well even for rectangular confidence regions.
\end{abstract}

\section{Introduction}

Consider a blocking experiment for comparing a set of test treatments with a control. As noted by \cite{HedayatEA}, such an experimental objective arises, for instance, in screening experiments or in experiments in which it is desired to assess the relative performance of new test treatments with respect to a standard treatment. Comparisons with a control are also quite natural for medical studies involving placebo (e.g., see \cite{Senn97}, \cite{RosaHarman17}).

Formally, we have
$$Y_j = \mu + \tau_{i(j)} + \theta_{k(j)} + \varepsilon_j, \quad j=1,\ldots,n, $$
where $\mu$ is the overall mean, $\tau_i$ is the effect of the $i$-th treatment ($0\leq i \leq v$), $\theta_{k}$ is the effect of the $k$-th block ($1 \leq k \leq d$), and the random errors $\varepsilon_1,\ldots,\varepsilon_n$ are uncorrelated, with zero mean and variance $\sigma^2<\infty$. Treatment $0$ denotes the control, and test treatments are numbered $1,\ldots,v$. By $\tau$, we denote the vector of treatment effects and by $\theta$ the vector of block effects.
The objective of estimating the comparisons of the test treatments with the control $\tau_i-\tau_0$ ($1 \leq i \leq v$) can be expressed as the estimation of $Q^T\tau$, where $Q:=(-1_v, I_v)^T$. Here $1_v$ denotes the column vector of ones in $\R^v$ and $I_v$ denotes the $v \times v$ identity matrix; moreover by $0_v$, we denote the column vector of zeros.
\bigskip

The most popular optimality criteria for treatment-control comparisons are $A$- and $MV$-optimality; for a survey, see \cite{HedayatEA} or \cite{Majumdar}. An $A$-optimal design minimizes the average variance of the (least squares) estimators of the $(\tau_i - \tau_0)$'s and an $MV$-optimal design minimizes the maximum variance of the estimators of the $(\tau_i-\tau_0)$'s. 


In this paper, we first provide approximate and exact designs that are optimal with respect to the $E$-optimality criterion, which minimizes the maximum variance for the linear combinations $\sum_{i>0} x_i\tau_i - (\sum_{i>0} x_i)\tau_0$ over all normalized $x\in \R^v$. 
The $E$-optimality received little attention for treatment-control experiments, with some exceptions: see \cite{MajumdarNotz}, \cite{Notz}, \cite{MorganWang}, \cite{RosaHarman17}. The lower interest resulted from the fact that this criterion was considered to be lacking a natural statistical interpretation (see \cite{MajumdarNotz}, \cite{HedayatEAR}). Indeed, the aforementioned minimax interpretation of $E$-optimality does not seem to be very compelling. 

Naturally, the justification for $E$-optimality for treatment-control comparisons received significant attention in the mentioned papers. \cite{MorganWang} examined the minimax interpretation more closely, using the so-called weighted variances. \cite{Notz} studied $E$-optimal designs in the presence of two-way heterogeneity. He showed that designs in a particular class of $E$-optimal row-column designs minimize the variance of the estimator of $\sum_{i>0} \tau_i/v - \tau_0$. \cite{RosaHarman17} studied optimal approximate block designs under the presence of some special (dose-escalation) constraints when the number of blocks is $v$ or $v+1$. They provided interpretation of the obtained $E$-optimal approximate designs analogous to that of \cite{Notz}, and provided an interpretation for the obtained designs by means of variances and covariances for the $(\tau_i-\tau_0)$'s. 

We extend the results of \cite{RosaHarman17} by providing \emph{all} $E$-optimal approximate designs for comparisons with a control for any number of blocks and treatments, and without the dose-escalation design constraints. We apply this characterization to obtain $E$-optimal exact block designs  for blocks of unequal sizes. Additionally, we extend the interpretation based on the variances and covariances for the contrasts of interest to all obtained $E$-optimal approximate designs, as well as to entire classes of $E$-optimal exact designs.
\bigskip

Besides the various optimality criteria, another approach for assessing block designs for the comparisons with a control is based on simultaneous confidence intervals; see, e.g., \cite{BechhoferTamhane}, \cite{Majumdar96b}, \cite{BortnickEA}. Under the assumption of normal errors, simultaneous confidence intervals for the $(\tau_i-\tau_0)$'s based on the multivariate $t$-distribution can be calculated. Then, one seeks designs that maximize the coverage probability of these confidence intervals, subject to a limit on the width of the intervals. Thus, this approach deals with rectangular confidence regions, instead of the confidence ellipsoids, which are in some sense minimized in the case of the standard optimality criteria like $A$- and $E$-optimality. However, the problem of maximizing the coverage probability of the simultaneous confidence intervals seems to be difficult to solve.

It is known that the $A$-optimal treatment proportions are close to the optimal proportions for these rectangular confidence regions (see \cite{BechhoferTamhane83}). In the second part of this paper, we examine the actual optimality of $A$-optimal block designs (rather than merely $A$-optimal treatment proportions) with respect to a different criterion based on rectangular confidence regions -- the so called $R$-optimality.
The criterion of $R$-optimality was proposed by \cite{Dette} in general experimental settings, and it minimizes the volume of the rectangular confidence region based on Bonferroni confidence intervals for the particular contrasts of interest. For the treatment-control comparisons, $R$-optimality allows for considering rectangular confidence regions, and at the same time circumventing the technical difficulties that are tied to the simultaneous confidence intervals based on the multivariate $t$-distribution.

\subsection{Experimental designs}

An \emph{exact design} $\xi$ determines in each block the numbers of trials that are performed with the various treatments. Thus, $\xi$ can be expressed as a function $\xi: \{0,\ldots,v\}\times\{1,\ldots,d\} \to \{0,1,2,\ldots,n\}$ such that $\sum_{i,k} \xi(i,k) = n$. The value $\xi(i,k)$ specifies the number of trials performed with treatment $i$ in block $k$. Suppose that the blocks $1,\ldots, d$ have pre-specified non-zero sizes $m_1, \ldots, m_d$. We denote the class of all block designs for $v+1$ treatments and $d$ blocks of sizes $m=(m_1,\ldots,m_d)^T>0_d$ by $D(v,d,m)$. If all blocks are assumed to be of the same size, say $q$, we write $D(v,d,q1_d)$.

For a given design $\xi$, let us denote the $(v+1) \times d$ design matrix $X(\xi)=(\xi(i,k))_{i,k}$, let $r(\xi)=X(\xi)1_d$ be the vector of total treatment replications and let $s(\xi)=X^T(\xi) 1_v$ be the vector of block sizes. Clearly, any $\xi \in D(v,d,m)$ must satisfy $s(\xi)=m$. Furthermore, we define $\mu_{ij}(\xi) = \sum_k \xi(i,k)\xi(j,k)$, $0\leq i,j \leq v$. For brevity, we usually omit the argument $\xi$ in $X(\xi)$, $r(\xi)$ etc.

The information matrix of an exact design $\xi$ for estimating all pairwise comparisons of treatments is $M(\xi) = \diag(r) - X\diag(s^{-1})X^T$, where $s^{-1}:=(s_1^{-1},\ldots,s_d^{-1})$. The treatment-control comparisons $Q^T\tau$ are estimable under $\xi$ if $\mathcal{C}(Q) \subseteq \mathcal{C}(M)$, where $\mathcal{C}$ denotes the column space. In such a case, we say that $\xi$ is feasible, and we have $\mathrm{rank}(M)=v$ and $\mathrm{rank}(Q^TM^-Q)=v$. The information matrix $N(\xi)=(Q^TM^-Q)^{-1}$ of a feasible design $\xi$ for estimating $Q^T\tau$ is obtained by deleting the first row and column of $M$ (see \cite{BechhoferTamhane}). Let us partition $X$ as $X^T = (z, Z^T)$, where $z$ is a $d \times 1$ vector; i.e., $Z=(\xi(i,k))_{i>0,k}$.
Then, the information matrix for comparing the test treatments with the control is \begin{equation}\label{eInfMat}
N(\xi) = \diag(r_1,\ldots,r_v) - Z\diag(s^{-1})Z^T.
\end{equation}
Note that $N(\xi)$ is proportional to the inverse of the covariance matrix of the least squares estimator of $\tau_1-\tau_0, \ldots, \tau_v-\tau_0$. A design is said to be $\Psi$-optimal if it minimizes $\Psi(N(\xi))$ for some function $\Psi$. The criteria of $A$-, $MV$-, and $E$-optimality can be defined as follows: an $A$-optimal design minimizes the trace of $N^{-1}(\xi)$, an $MV$-optimal minimizes the maximum diagonal element of $N^{-1}(\xi)$, and an $E$-optimal design minimizes the largest eigenvalue of $N^{-1}(\xi)$.
\bigskip

An \emph{approximate design} is a function $\xi:\{0,\ldots,v\}\times\{1,\ldots,d\} \to [0,1]$, such that $\sum_{i,k} \xi(i,k) = 1$. Then, the value $\xi(i,k)$ represents the proportion of all trials for treatment $i$ and block $k$, rather than the actual number of trials. Analogously to the exact case, we define the terms $X(\xi)$, $Z(\xi)$, $r(\xi)$ etc. Since the sizes of the blocks are assumed to be positive, we always have $s>0_d$. Under the usual assumption of blocks of the same size, $s$ satisfies $s=1_d/d$.

The treatment-control comparisons $Q^T\tau$ are estimable under an approximate design $\xi$ if $\mathcal{C}(Q) \subseteq \mathcal{C}(M(\xi))$. In such a case, we say that $\xi$ is feasible and the information matrix of $\xi$ for treatment-control comparisons is given as in the exact case by \eqref{eInfMat}.

We say that a design $\xi$ that satisfies $\xi(i,k) = r_i s_k$ for all $i$, $k$ is a \emph{product design} of $r$ and $s$. We denote such a design as $\xi = r \otimes s$.

%
%

\section{$E$-optimality}

\subsection{Approximate designs}\label{ssEoptApprox}

A design is $E$-optimal if it minimizes $\lambda_{\max}(N^{-1}(\xi))$ or, equivalently, if it maximizes $\lambda_{\min}(N(\xi))$, where $\lambda_{\max}$ and $\lambda_{\min}$ denote the largest and the smallest eigenvalue, respectively.
In the following theorem, we provide the complete characterization of $E$-optimal approximate block designs for comparing the test treatments with a control: an approximate design $\xi^*$ is $E$-optimal for comparisons with a control if and only if (i) in each block $\xi^*$ assigns one half of the trials to the control and (ii) $\xi^*$ is equireplicated in the test treatments.

\begin{theorem}\label{tEopt}
	An approximate block design $\xi$ is $E$-optimal for treatment-control comparisons if and only if $\xi$ satisfies
	\begin{equation}\label{eEopt}
	\xi(0,k)=\frac{s_k(\xi)}{2} \quad\text{and}\quad r_1(\xi) = \ldots = r_v(\xi) = \frac{1}{2v}.
	\end{equation}
\end{theorem}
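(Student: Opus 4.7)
The plan is to establish the uniform upper bound $\lambda_{\min}(N(\xi)) \leq 1/(4v)$ over all approximate designs, track the equality case, and then verify that designs satisfying \eqref{eEopt} indeed attain $1/(4v)$.

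For the upper bound, I will probe $N(\xi)$ with the specific unit vector $x = 1_v/\sqrt{v}$. Expanding $(1/v)\, 1_v^T N(\xi) 1_v$ via \eqref{eInfMat} and using $Z^T 1_v = (s_k - \xi(0,k))_k$ together with $\sum_{i \geq 0} r_i = 1$, the expression should collapse to $(1/v)[r_0 - \sum_k \xi(0,k)^2/s_k]$. The Cauchy--Schwarz inequality $\sum_k \xi(0,k)^2/s_k \geq r_0^2$ (which uses $\sum_k s_k = 1$, with equality iff $\xi(0,k) = r_0 s_k$ for every $k$) combined with the scalar bound $r_0(1-r_0) \leq 1/4$ yields $x^T N(\xi) x \leq 1/(4v)$, hence $\lambda_{\min}(N(\xi)) \leq 1/(4v)$.

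For the ``if'' direction, assume \eqref{eEopt} holds; then $\diag(r_1,\ldots,r_v) = I_v/(2v)$, so it is enough to prove $\lambda_{\max}(A) = 1/(4v)$ for $A := Z\diag(s^{-1})Z^T$. The matrix $A$ is symmetric and entrywise non-negative, and a short computation using $Z^T 1_v = s/2$ (which follows from $\xi(0,k)=s_k/2$) gives $A 1_v = (1/(4v))\, 1_v$, so the row sums of $A$ are constant and equal to $1/(4v)$. By the standard Perron--Frobenius/Ger\v{s}gorin bound ($\|A\|_\infty$ equals the constant row sum for a non-negative matrix), this value is the spectral radius of $A$, hence $\lambda_{\max}(A) = 1/(4v)$ and $\lambda_{\min}(N(\xi))=1/(4v)$ as required.

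For the ``only if'' direction, I reverse the chain of inequalities from the upper bound. If $\xi$ is $E$-optimal, then all three of the inequalities above must be tight: Cauchy--Schwarz forces $\xi(0,k)=r_0 s_k$, the maximum of $r_0(1-r_0)$ forces $r_0=1/2$ (so $\xi(0,k)=s_k/2$), and the Rayleigh-quotient equality forces $1_v$ to be an eigenvector of $N(\xi)$ for the eigenvalue $1/(4v)$. Under $\xi(0,k)=s_k/2$, a direct computation of $N(\xi) 1_v$ using \eqref{eInfMat} simplifies to $r_{>0}/2$, and equating this with $(1/(4v))\,1_v$ delivers $r_i = 1/(2v)$ for every $i>0$, giving \eqref{eEopt}. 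The main obstacle, as I see it, is not any single estimate (each is elementary) but the careful simultaneous bookkeeping of these three equality conditions so that they combine cleanly into the two statements in \eqref{eEopt}; picking the test vector $1_v/\sqrt{v}$ at the outset is what makes the bound tight enough to extract both conditions in one pass.
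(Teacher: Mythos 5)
Your proof is correct, and while it shares the paper's central device (testing the Rayleigh quotient at $1_v/\sqrt{v}$ and applying Cauchy--Schwarz to $\sum_k \xi(0,k)^2/s_k$), it takes a genuinely different and more self-contained route in the ``only if'' direction. The paper does not derive $r_1 = \ldots = r_v = 1/(2v)$ from the equality analysis at all: it imports the condition $r(\xi)=r^*$ (hence both $r_0=1/2$ and the equireplication of test treatments) from the external symmetrization results of \cite{RosaHarman16} (Theorems 1 and 6 there), computes the optimal value $1/(4v)$ from a product design, and then uses the Rayleigh bound only to extract $\xi(0,k)=s_k/2$. You instead keep $r_0$ free, obtain $\frac{1}{v}1_v^T N 1_v = \frac{1}{v}\bigl(r_0 - \sum_k \xi(0,k)^2/s_k\bigr) \leq \frac{1}{v}\,r_0(1-r_0) \leq \frac{1}{4v}$, and recover the replication condition from the standard fact that equality of the Rayleigh quotient with $\lambda_{\min}$ forces $1_v$ to be an eigenvector, whence $N1_v = \frac{1}{2}(r_1,\ldots,r_v)^T = \frac{1}{4v}1_v$ pins down $r_i = 1/(2v)$. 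This buys independence from the cited theorems at essentially no extra cost, extracting both constraints in \eqref{eEopt} from a single chain of inequalities; the paper's route buys brevity by outsourcing the harder half to known results. For the ``if'' direction, your argument ($N = I_v/(2v) - A$ with $A = Z\diag(s^{-1})Z^T$ entrywise non-negative with constant row sums $1/(4v)$, so $\lambda_{\max}(A) \leq \rho(A) \leq \lVert A \rVert_\infty = 1/(4v)$) rests on the same sign structure as the paper's componentwise eigenvector argument borrowed from \cite{MajumdarNotz} (non-positive off-diagonal entries of $N$), merely packaged through the induced-norm bound; both are equally valid. One small point to patch: formula \eqref{eInfMat} is stated for feasible designs, so in the ``if'' direction you should first note that a design satisfying \eqref{eEopt} is feasible --- either via connectedness, as the paper does citing \cite{EcclestonHedayat}, or by observing that positive definiteness of the matrix on the right-hand side of \eqref{eInfMat} forces $\mathrm{rank}(M)=v$ and hence $\mathcal{C}(Q) \subseteq \mathcal{C}(M)$.
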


\begin{proof}
	Let $\xi$ be $E$-optimal and let $r_0^*=1/2$ and $r_i^* = 1/(2v)$ for $i>1$. From Theorems 1 and 6 of \cite{RosaHarman16} it follows that a $\Psi$-optimal block design must satisfy $r(\xi)=r^*$. Moreover, the product design $\tilde{\xi}=r^* \otimes s$ for any $s>0$, $\sum_k s_k = 1$, is $E$-optimal (cf. \cite{Schwabe}, \cite{RosaHarman16}). Then $Z(\tilde{\xi}) = 1_v s^T/(2v)$ and
	$$N(\tilde{\xi}) = \frac{1}{2v} I_v -  \frac{1}{4v^2} 1_v s^T \diag(s^{-1}) s 1_v^T = \frac{1}{2v} I_v - \frac{1}{4v^2} J_v.$$
	Therefore, the optimal smallest eigenvalue is $\lambda^*=1/(4v)$.
	
	Moreover, \eqref{eInfMat} yields
	$$\begin{aligned}
	\lambda_{\min}(N(\xi))
	&= \min_{x^Tx = 1} x^T N(\xi)x \leq \frac{1}{v}1_{v}^T N(\xi) 1_{v} \\
	&= \frac{1}{v} \big( \sum_{i>0} r_i - \sum_{k=1}^d \frac{1}{s_k} (\sum_{i>0} \xi(i,k))^2 \big) 
	= \frac{1}{v} \big( \frac{1}{2} - \sum_{k=1}^d \frac{(s_k-\xi(0,k))^2}{s_k} \big) \\
	&= \frac{1}{v} \big( \frac{1}{2} - \sum_{k=1}^d s_k + 2\sum_{k=1}^d \xi(1,k) - \sum_{k=1}^d\frac{\xi^2(0,k)}{s_k} \big) 
	= \frac{1}{2v} - \frac{1}{v}\sum_{k=1}^d\frac{\xi^2(0,k)}{s_k} ,
	\end{aligned}$$
	using the facts that $\sum_{i>0} r_i = 1/2$, $\sum_{i>0} \xi(i,k) = s_k - \xi(0,k)$ and $\sum_k s_k = 1$. Since $\sum_k \xi(0,k) = 1/2$, for any $s_1, \ldots, s_d$ the following holds
	$$\sum_{k=1}^d \frac{\xi^2(1,k)}{s_k} \geq \sum_{k=1}^d \frac{(s_k/2)^2}{s_k} = \frac{1}{4},$$
	where the inequality is attained as equality if and only if $\xi(1,k)=s_k/2$ for all $k=1,\ldots,d$. Hence, 
	\begin{equation}\label{eLambdaMin}
	\lambda_{\min}(N(\xi)) \leq \frac{1}{v}(\frac{1}{2}-\frac{1}{4}) = \frac{1}{4v}=\lambda_{\min}^*.
	\end{equation}
	Because $\xi$ is $E$-optimal, the inequality is attained as equality, and thus $\xi(1,k)=s_k/2$ for all $k=1,\ldots,d$.
	
	For the converse part, let $\xi$ satisfy \eqref{eEopt}. Then, $\xi$ is connected (see \cite{EcclestonHedayat}) and therefore feasible. Moreover, $Z^T 1_v= s/2$, $Z1_d = (r_1, \ldots, r_v)^T = 1_v/(2v)$ and
	$$N1_v = \frac{1}{2v}1_v - \frac{1}{2} Z \diag(s^{-1}) s = \frac{1}{2v}1_v - \frac{1}{4v}1_v = \frac{1}{4v}1_v.$$
	That is, the optimal eigenvalue $\lambda^*=1/(4v)$ is an eigenvalue of $N$ corresponding to the eigenvector $1_v$. Therefore, it suffices to prove that $\lambda^*$ is the smallest eigenvalue of $N$.
	
	Let $N = (n_{ij})_{i,j}$. We note that $n_{ij}\leq 0$ for $i\neq j$.
	Using an argument analogous to that in the proof of Theorem 3.1 by \cite{MajumdarNotz}, let $x$ be an eigenvector of $N(\xi)$. Let us denote the eigenvalue that corresponds to $x$ as $\lambda$. By multiplying $x$ by an appropriate constant, we obtain $\max_j |x_j| = 1$. Thus, $x_j \leq 1$ for all $1\leq j \leq v$. Let $i$ be the index that satisfies $|x_i|=1$. Then, by multiplying $x$ by $\pm1$, we obtain $x_i=1$. Now, we can write
	$$(Nx)_i = n_{ii} x_i + \sum_{j \neq i} n_{ij} x_j \geq n_{ii} + \sum_{j \neq i} n_{ij} = (N1_v)_i,$$
	where the inequality follows from $n_{ij} \leq 0$ for $j\neq i$, and $x_j \leq 1$ for $1\leq j \leq v$. Because $(Nx)_i = \lambda x_i = \lambda$ and $(N1_v)_i = \lambda^*$, we have $\lambda^* \leq \lambda$ for any eigenvalue $\lambda$.
\end{proof}

\subsection{Interpretation of $E$-optimality}

It is well known that an $E$-optimal design minimizes the maximum variance of  linear functions $x^T Q^T\tau$ of $Q^T\tau$, over all normalized $x \in \R^v$. That is, an $E$-optimal design for comparisons with a control minimizes the maximum variance for $x^TQ^T\tau = \sum_{i>0} x_i\tau_i - (\sum_{i>0} x_i)\tau_0$ over all $\lVert x \rVert = 1$, as mentioned in the Introduction.

We say that a design $\xi$ is $c$-optimal, where $c \in \R^{v+1}$, if it minimizes the variance of the least squares estimator of $c^T\tau$. Therefore, $\xi$ is $c$-optimal if it minimizes $c^T M^{-}(\xi) c$.
The relationship between $E$-optimality and $c$-optimality for approximate designs (see \cite{PukStudden}) allows for a statistically meaningful interpretation of $E$-optimality, analogous to \cite{Notz}, \cite{RosaHarman17}. Moreover, this relationship provides a statistical justification for \emph{all} approximate designs that are $E$-optimal for treatment-control comparisons.

\begin{theorem}\label{tEoptcopt}
	Let $\xi$ be an $E$-optimal approximate block design for comparisons with a control. Then
	\begin{itemize}
		\item[(i)] $\xi$ is optimal for $\tilde{c}^T\tau$, where $\tilde{c}^T=(-1,1_v^T/v)$;
		\item[(ii)]  $\xi$ minimizes the variance of the estimator of $(\sum_{i>0} \tau_i)/v - \tau_0$;
		\item[(iii)] 
		$\xi$ minimizes
		\begin{equation}\label{eVarCovSum}
		\sum_{i=1}^v\Var(\widehat{\tau_i-\tau_0}) + \sum_{1\leq i\neq j \leq v} \lvert\Cov(\widehat{\tau_i-\tau_0}, \widehat{\tau_j-\tau_0})\rvert
		\end{equation}
	\end{itemize}
\end{theorem}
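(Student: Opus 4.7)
The scaffolding for all three parts is the observation, already recorded inside the proof of Theorem~\ref{tEopt}, that for every $E$-optimal $\xi$ one has $N(\xi)\,1_v=(1/(4v))\,1_v$; hence $1_v/\sqrt{v}$ is a unit eigenvector of $N(\xi)$ corresponding to its smallest eigenvalue $\lambda^*=1/(4v)$.

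For parts (i) and (ii), I would invoke the Pukelsheim--Studden correspondence between $E$- and $c$-optimality cited just before the theorem: if $\xi$ is $E$-optimal for $Q^T\tau$, then $\xi$ is $c$-optimal for $c=Qu$ whenever $u$ is a unit eigenvector of $N(\xi)$ for its smallest eigenvalue. Taking $u=1_v/\sqrt{v}$ and using the identity $Q1_v=(-v,1_v^T)^T=v\tilde{c}$ then gives $\tilde{c}$-optimality (since $c$-optimality is invariant under positive rescaling of $c$), which is precisely (i). Part (ii) is an immediate rewriting of (i) via $\tilde{c}^T\tau = v^{-1}\sum_{i>0}\tau_i-\tau_0$.

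For part (iii), my plan is to exploit the $M$-matrix structure of $N(\xi)$. From \eqref{eInfMat} one has $(N(\xi))_{ij} = -\sum_k\xi(i,k)\xi(j,k)/s_k \leq 0$ for $i\neq j$, so for every feasible $\xi$ the matrix $N(\xi)$ is symmetric, positive definite and has nonpositive off-diagonal entries, i.e.\ it is a symmetric $M$-matrix. The classical inverse-nonnegativity theorem then makes every entry of $N^{-1}(\xi)$ nonnegative, whence $\lvert(N^{-1}(\xi))_{ij}\rvert=(N^{-1}(\xi))_{ij}$. Since $\Cov(\widehat{Q^T\tau})=\sigma^2 N^{-1}(\xi)$, the expression in \eqref{eVarCovSum} therefore equals, for every feasible $\xi$,
$\sigma^2\sum_{i,j}(N^{-1}(\xi))_{ij} = \sigma^2\,1_v^T N^{-1}(\xi)1_v = \sigma^2 v^2\,\tilde{c}^T M^-(\xi)\tilde{c}$,
using $Q1_v=v\tilde{c}$ once more. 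Minimising this quantity over designs is exactly $\tilde{c}$-optimality, which by (i) every $E$-optimal design satisfies.

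The only nonroutine ingredient will be the componentwise bound $N^{-1}(\xi)\geq 0$ required in (iii); I would discharge it by checking the symmetric $M$-matrix property directly from \eqref{eInfMat} and citing the standard inverse-positivity theorem. All remaining steps reduce to the identity $Q1_v=v\tilde{c}$, the eigenvector computation already carried out for Theorem~\ref{tEopt}, and the Pukelsheim--Studden equivalence quoted in the paper.
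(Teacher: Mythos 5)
Your part (iii) and your reduction of (i)/(ii) to the identity $Q1_v=v\tilde{c}$ follow the paper's route: the paper likewise passes from $\tilde{c}$-optimality to minimizing $1_v^T N^{-1}(\xi)1_v$ and invokes inverse-nonnegativity of the $M$-matrix $N(\xi)$ (Theorem 6.2.3 of Berman--Plemmons) to identify this quantity with \eqref{eVarCovSum}. The genuine gap is in how you obtain (i). The Pukelsheim--Studden correspondence does \emph{not} say that an $E$-optimal $\xi$ is $c$-optimal for $c=Qu$ ``whenever $u$ is a unit eigenvector of $N(\xi)$ for its smallest eigenvalue''; it requires $\lambda_{\min}(N(\xi))$ to be \emph{simple}, and the form the paper uses then transfers the conclusion to all $E$-optimal designs. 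Your version is false as stated: take $v=2$, $d=2$, equal blocks, $\xi(0,k)=s_k/2$, and put test treatment $k$ alone in the rest of block $k$, i.e.\ $\xi(k,k)=s_k/2$. This design satisfies \eqref{eEopt}, hence is $E$-optimal by Theorem \ref{tEopt}, and a direct computation gives $N(\xi)=\tfrac{1}{8}I_2$, so \emph{every} unit vector is an eigenvector for $\lambda_{\min}$. Your lemma would then make $\xi$ optimal for $c=Qe_1$, i.e.\ for estimating $\tau_1-\tau_0$ alone; but a design placing weight $1/4$ on each of treatments $0$ and $1$ in each block achieves $c^TM^-c=4$, whereas your design has $e_1^TN^{-1}(\xi)e_1=8$. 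So the eigenvector direction is not determined, and no $c$-optimality conclusion can be drawn design-by-design.

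Since you apply the lemma to an \emph{arbitrary} $E$-optimal $\xi$, whose smallest eigenvalue can indeed be multiple (as above), your argument for (i) fails as written. The repair is exactly the step the paper takes and you skipped: exhibit one $E$-optimal design for which simplicity can be verified --- the product design $r^*\otimes s$, for which $N=\tfrac{1}{2v}I_v-\tfrac{1}{4v^2}J_v$, whose smallest eigenvalue $1/(4v)$ is simple with eigenvector $1_v$ (the only other eigenvalue being $1/(2v)$) --- and then invoke Theorem 2.2 and Corollary 2.3 of Pukelsheim--Studden in the form ``if \emph{some} $E$-optimal design has a simple $\lambda_{\min}$ with eigenvector $h$, then \emph{every} $E$-optimal design is optimal for $h^TQ^T\tau$.'' With that substitution, your derivation of (i), and with it (ii) and (iii), goes through unchanged; your observation that $N(\xi)1_v=(1/(4v))1_v$ for every $E$-optimal $\xi$ is correct but is not needed once the correct form of the transfer theorem is used.
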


\begin{proof}
	Using Theorem 2.2 and Corollary 2.3 of \cite{PukStudden}, we obtain that if a design $\xi$ is $E$-optimal for $Q^T\tau$, $\lambda_{\min}(N(\xi))$ has multiplicity 1 and $h$ is the corresponding eigenvector, then any $E$-optimal design is optimal for $h^TQ^T\tau$.
	
	Let $\xi=r^* \otimes s$, where $r^*=(1/2,1_v/(2v))^T$, be an $E$-optimal product design. Then, $N(\xi) = \frac{1}{2v} I_v - \frac{1}{4v^2} J_v$ as shown in the proof of Theorem \ref{tEopt}. The smallest eigenvalue of $N(\xi)$ has multiplicity 1 and the corresponding eigenvector is $1_v$. Therefore, any $E$-optimal design is optimal for $1_v^T Q^T \tau = (-1,1_v^T/v)\tau$. Parts (i) and (ii) are clearly equivalent.
	
	The optimality for $Q1_v$ means that any $E$-optimal design $\xi$ minimizes $1_v^T Q^T M^-(\xi) Q 1_v = 1_v^T N^{-1}(\xi) 1_v$, which is the sum of all elements of the inverse of the information matrix (which in turn is proportional to the covariance matrix). Therefore, $\xi$ minimizes the sum of variances and covariances for the treatment-control comparisons. Because for any feasible $\xi$, the non-diagonal elements of $N(\xi)$ are non-positive and every real eigenvalue of $N(\xi)$ is positive, Theorem 6.2.3 of \cite{BermanPlemmons} yields that every element of $N^{-1}(\xi)$ is non-negative. Thus, the minimization of the sum of the covariances is equivalent to the minimization of the sum of the absolute values of the covariances.
\end{proof}

Theorem \ref{tEoptcopt}(ii) states that $E$-optimal designs also minimize the variance of the estimator for comparing the average test treatment effect with the control effect. Thus, $E$-optimal designs guarantee that the overall effect of the test treatments relative to control is estimated with the greatest precision. The key interpretation of $E$-optimality is part (iii), which states that any $E$-optimal design for treatment-control comparisons minimizes the sum of variances and of absolute values of covariances for $\tau_1-\tau_0, \ldots, \tau_v-\tau_0$.
This interpretation is similar to that of \cite{RosaHarman17} for dose-escalation designs, and is analogous to that of $A$- or $MV$-optimality, but based on both the variances and the covariances of the estimators.
In the discussion on \cite{HedayatEA}, multiple authors (\cite{BechhoferTamhane88}, \cite{GiovagnoliVerdinelli}) criticized the usually used criteria of $A$- and $MV$-optimality on the basis that they deal only with the variances of the estimators of $\tau_i-\tau_0$, disregarding the covariances between the estimators. The provided interpretation suggests that especially when one is also interested in the covariances of the estimators, $E$-optimality is a reasonable choice.

\subsection{Exact designs}

For the usual case of blocks of the same size $D(v,d,q1_d)$, \cite{MajumdarNotz} provide a wide class of $E$-optimal designs (Theorem 3.1 therein), which we state in the following proposition.

\begin{proposition}\label{pEopt}
	If there exists an exact design $\xi \in D(v,d,q1_d)$ that satisfies (i) $\xi(0,k) = q/2$ if $q$ is even, or $\xi(0,k) \in \{ \lfloor q/2 \rfloor, \lfloor q/2 \rfloor +1\}$ if $q$ is odd $(0\leq k \leq d)$, and (ii) $\mu_{01}(\xi) = \ldots = \mu_{0v}(\xi)$, then $\xi$ is $E$-optimal for comparisons with a control in $D(v,d,q1_d)$.
\end{proposition}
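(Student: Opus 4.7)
The plan is to mirror the two-part structure of the proof of Theorem \ref{tEopt}, now with the added bookkeeping needed because $\xi(0,k)$ is constrained to be a non-negative integer at most $q$. The key identity is that for any feasible exact design $\xi \in D(v,d,q1_d)$, using the normalized test vector $1_v/\sqrt v$ in the Rayleigh quotient and the computation from the proof of Theorem \ref{tEopt} (with $s_k = q$ and $\sum_i r_i = dq$) gives
$$\lambda_{\min}(N(\xi)) \leq \frac{1}{v} 1_v^T N(\xi) 1_v = \frac{1}{vq}\sum_{k=1}^d \xi(0,k)\bigl(q - \xi(0,k)\bigr).$$
Each term in this sum is a quadratic in the integer $\xi(0,k) \in \{0,1,\ldots,q\}$ and is maximized at $\xi(0,k) = q/2$ when $q$ is even, and at $\xi(0,k) \in \{\lfloor q/2\rfloor,\lfloor q/2\rfloor + 1\}$ when $q$ is odd. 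Let $B$ denote the resulting maximal value of the right-hand side; then $\lambda_{\min}(N(\xi)) \leq B$ for every feasible $\xi \in D(v,d,q1_d)$. Condition (i) is exactly what is needed for the term-by-term maximum to be attained.

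For the converse, let $\xi^*$ satisfy (i) and (ii). Feasibility of $\xi^*$ follows from condition (ii), since $\mu_{0i}(\xi^*) = \mu > 0$ forces every test treatment to be co-present with the control in some block, so that $\xi^*$ is connected in the sense of \cite{EcclestonHedayat}. A direct calculation using $Z^T 1_v = (q - \xi(0,k))_k$ and $\xi(i,k)$ in the definition of $N(\xi^*)$ yields
$$(N(\xi^*) 1_v)_i = r_i(\xi^*) - \frac{1}{q}\bigl(q r_i(\xi^*) - \mu_{0i}(\xi^*)\bigr) = \frac{\mu_{0i}(\xi^*)}{q},$$
so under condition (ii) the vector $1_v$ is an eigenvector of $N(\xi^*)$ with eigenvalue $\mu/q$, and summing $\mu_{0i}$ over $i>0$ shows that this eigenvalue is precisely $B$.

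It remains to verify that $B$ is the smallest eigenvalue of $N(\xi^*)$; this is the only place where a genuinely new argument beyond that of Theorem \ref{tEopt} is not required, since the converse part of that proof applies verbatim. The off-diagonal entries $n_{ij} = -\mu_{ij}(\xi^*)/q$ are non-positive, so if $x$ is any eigenvector with eigenvalue $\lambda$, rescaling to $x_i = 1 = \max_j|x_j|$ yields
$$\lambda = (N(\xi^*)x)_i = n_{ii} + \sum_{j \neq i} n_{ij} x_j \geq n_{ii} + \sum_{j \neq i} n_{ij} = (N(\xi^*) 1_v)_i = B,$$
forcing $B \leq \lambda_{\min}(N(\xi^*))$. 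Combined with the upper bound from the first step, this gives $\lambda_{\min}(N(\xi^*)) = B$, establishing $E$-optimality.

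The only step requiring genuine care is the maximization of $\sum_k \xi(0,k)(q - \xi(0,k))$ over integer assignments: one must check that condition (i) characterizes (up to the arbitrary distribution of $\lfloor q/2\rfloor$'s and $\lceil q/2\rceil$'s across blocks in the odd case) every integer maximizer, so that designs of the claimed form saturate the upper bound. Beyond this discrete bookkeeping, the argument is essentially the exact analogue of Theorem \ref{tEopt}, with the Perron-type comparison playing the same role in both cases.
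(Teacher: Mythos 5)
Your proof is correct, and it takes essentially the same route as the source: the paper states Proposition \ref{pEopt} without proof, citing Theorem 3.1 of \cite{MajumdarNotz}, and the argument there --- the bound $\lambda_{\min}(N(\xi)) \leq \frac{1}{v}1_v^T N(\xi)1_v = \frac{1}{vq}\sum_{k} \xi(0,k)\bigl(q-\xi(0,k)\bigr)$, its term-by-term maximization over integers yielding condition (i), the identity $(N(\xi^*)1_v)_i = \mu_{0i}(\xi^*)/q$ under condition (ii), and the nonpositive-off-diagonal eigenvalue comparison --- is precisely what you reconstruct. Indeed, the paper itself reuses exactly these ingredients (that $1_v$ is an eigenvector of $N(\xi^*)$ for the smallest eigenvalue $\mu_{01}(\xi^*)/q$, and the Perron-type argument it explicitly attributes to the proof of that theorem) in its proofs of Theorems \ref{tEopt} and \ref{EoptVarCovSum}, so your reconstruction matches the intended proof step for step.
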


Because the conditions for $E$-optimality of approximate designs in Section \ref{ssEoptApprox} are not very stringent, $E$-optimal exact designs can often be directly obtained from the approximate ones. Indeed, in the following theorem, we provide such $E$-optimal exact designs for many combinations of $v$, $d$, $m$. Moreover, these exact designs retain the $\tilde{c}$-optimality property and they minimize the sum of the variances and covariances for $(\tau_i-\tau_0)$'s.

\begin{theorem}\label{tEoptExact}
	If there exists an exact design $\xi \in D(v,d,m)$ that satisfies $\xi(0,k)=m_k/2$, $1 \leq k \leq d$, and $\xi$ is equireplicated in the test treatments, then $\xi$ is $E$-optimal for comparisons with a control in $D(v,d,m)$. Moreover, $\xi$ is optimal for $(\sum_{i>0} \tau_i)/v - \tau_0$ and it minimizes \eqref{eVarCovSum}.
\end{theorem}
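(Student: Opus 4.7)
The plan is to reduce the statement to Theorems~\ref{tEopt} and \ref{tEoptcopt} by embedding the exact design $\xi$ into the class of approximate designs. Setting $n := \sum_k m_k$ and passing to the normalized design $\bar\xi := \xi/n$, the class $D(v,d,m)$ becomes a subset of the approximate designs with block-size proportions $s(\bar\xi) = m/n$. Since $E$-optimality is governed by $\lambda_{\min}(N(\cdot))$ and the information matrix scales homogeneously under $\xi\mapsto\bar\xi$, any exact design whose normalization attains the approximate optimum $1/(4v)$ is automatically $E$-optimal within $D(v,d,m)$.

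First I would translate the hypotheses: the assumption $\xi(0,k)=m_k/2$ gives $\bar\xi(0,k) = s_k(\bar\xi)/2$ directly, while equireplication of $\xi$ in the test treatments, combined with $r_0(\xi) = \sum_k m_k/2 = n/2$, forces $r_i(\bar\xi) = 1/(2v)$ for every $i>0$. Hence $\bar\xi$ satisfies \eqref{eEopt}, and Theorem~\ref{tEopt} yields $\lambda_{\min}(N(\bar\xi))=1/(4v)$. A short computation from \eqref{eInfMat} shows $N(\xi) = n\,N(\bar\xi)$, because $\diag(r)$ and $Z\diag(s^{-1})Z^T$ each scale by a factor of $n$ when $\xi$ is replaced by $\xi/n$. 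Therefore $\lambda_{\min}(N(\xi)) = n/(4v)$; any competitor $\eta\in D(v,d,m)$ would yield, via $\eta/n$, an approximate design with a larger smallest eigenvalue than $1/(4v)$, contradicting Theorem~\ref{tEopt}. This establishes $E$-optimality of $\xi$ in $D(v,d,m)$.

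For the remaining two assertions I would apply Theorem~\ref{tEoptcopt} to the $E$-optimal approximate design $\bar\xi$: parts (i)--(ii) give that $\bar\xi$ minimizes the variance of the estimator of $\tilde c^T\tau=(\sum_{i>0}\tau_i)/v-\tau_0$ over all approximate designs, and part (iii) gives that it minimizes \eqref{eVarCovSum}. Because every design in $D(v,d,m)$ has the same total sample size $n$, the objectives underlying both criteria (which involve $N^{-1}$, scaling by $1/n$) are just rescalings of their approximate counterparts on this subclass; the same superset argument then transfers both optimality properties from $\bar\xi$ among all approximate designs to $\xi$ within $D(v,d,m)$. I do not foresee a conceptual obstacle; the only point requiring care is the bookkeeping for the scaling identity $N(\xi)=n\,N(\bar\xi)$ and the consistent interpretation of the variance--covariance criterion under this rescaling.
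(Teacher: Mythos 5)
Your proof is correct and follows essentially the same route as the paper's: normalize $\xi$ to $\xi/n$, observe that it satisfies \eqref{eEopt} so Theorem~\ref{tEopt} applies, transfer $E$-optimality back to $D(v,d,m)$ via the scaling $N(\xi)=n\,N(\xi/n)$, and invoke Theorem~\ref{tEoptcopt} for the $\tilde{c}$-optimality and the minimization of \eqref{eVarCovSum}. The only difference is that you spell out the scaling and transfer arguments that the paper compresses into ``clearly,'' which is a harmless (indeed helpful) elaboration.
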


\begin{proof}
	The approximate version $\xi/n$ of $\xi$ is in fact an $E$-optimal approximate design, because it satisfies the conditions of Theorem \ref{tEopt}. Then, $\xi$ is clearly an $E$-optimal exact design. Moreover, Theorem \ref{tEoptcopt} yields the optimality for $\tilde{c}^T\tau$ and for \eqref{eVarCovSum}.
\end{proof}

For blocks of equal size,  Theorem \ref{tEoptExact} becomes a special case of Proposition \ref{pEopt}. However, Theorem \ref{tEoptExact} also covers the case of blocks of unequal sizes. For $v=2$ and for blocks of unequal sizes $m=(2,2,4)^T$, a simple $E$-optimal block design $\xi$ for treatment-control comparisons, which also minimizes \eqref{eVarCovSum} is given by
$$X(\xi)=\begin{pmatrix}
 1  &  1 &   2  \\
 1 &  0  &  1 \\
 0  &  1 &   1  
\end{pmatrix}.$$
\bigskip

We provide the interpretation of $E$-optimality for comparisons with a control for approximate designs through \eqref{eVarCovSum} also for all $E$-optimal exact designs given by \cite{MajumdarNotz}, which is, to our knowledge, the widest known class of $E$-optimal block designs for comparisons with a control.

\begin{theorem}\label{EoptVarCovSum}
	All $E$-optimal designs given by Proposition \ref{pEopt} are also optimal for $(\sum_{i>0} \tau_i)/v - \tau_0$ and minimize the sum of variances and the absolute values of covariances \eqref{eVarCovSum}.
\end{theorem}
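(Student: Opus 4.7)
The plan is to reduce both conclusions to the single statement that every design from Proposition \ref{pEopt} minimizes the scalar $1_v^T N^{-1}(\xi) 1_v$ over $\xi \in D(v,d,q1_d)$. Observe first that, for $\tilde{c}^T = (-1, 1_v^T/v)$, one has $\tilde{c} = Q 1_v / v$, hence $\tilde{c}^T M^-(\xi)\tilde{c} = v^{-2}\, 1_v^T N^{-1}(\xi) 1_v$; therefore minimizing $1_v^T N^{-1}(\xi) 1_v$ is exactly $\tilde{c}$-optimality, which by Theorem \ref{tEoptcopt}(i)--(ii) yields optimality for $(\sum_{i>0}\tau_i)/v - \tau_0$. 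For the interpretation via \eqref{eVarCovSum}, I would reuse the Berman--Plemmons argument from the proof of Theorem \ref{tEoptcopt}(iii): the off-diagonal entries of $N(\xi)$ are non-positive and its eigenvalues are positive for every feasible block design, so all entries of $N^{-1}(\xi)$ are non-negative. The quantity $1_v^T N^{-1}(\xi) 1_v$ then coincides, up to the factor $\sigma^2$, with \eqref{eVarCovSum}, so minimizing the former is equivalent to minimizing the latter.

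The main tool for the minimization is the Cauchy--Schwarz bound $v^2 = (1_v^T 1_v)^2 \leq \bigl(1_v^T N(\xi) 1_v\bigr)\bigl(1_v^T N^{-1}(\xi) 1_v\bigr)$, with equality iff $1_v$ is an eigenvector of $N(\xi)$. A direct computation from \eqref{eInfMat} (with $s_k = q$ for each $k$) yields $(N(\xi) 1_v)_i = \mu_{0i}(\xi)/q$, and therefore
\begin{equation*}
1_v^T N(\xi) 1_v = \frac{1}{q}\sum_{i=1}^v \mu_{0i}(\xi) = \frac{1}{q}\sum_{k=1}^d \xi(0,k)\bigl(q-\xi(0,k)\bigr).
\end{equation*}
The elementary integer inequality $a(q-a) \leq \lfloor q^2/4\rfloor$ for $a \in \{0,1,\ldots,q\}$ (with equality precisely when $a \in \{\lfloor q/2\rfloor, \lceil q/2\rceil\}$) then gives, for every exact $\xi \in D(v,d,q1_d)$,
\begin{equation*}
1_v^T N(\xi) 1_v \leq \frac{d\lfloor q^2/4\rfloor}{q}, \qquad 1_v^T N^{-1}(\xi) 1_v \geq \frac{v^2 q}{d\lfloor q^2/4\rfloor}.
\end{equation*}
Condition (i) of Proposition \ref{pEopt} is exactly the equality condition of the integer bound, while condition (ii) is exactly the equality condition of Cauchy--Schwarz (since $\mu_{0i}(\xi)/q$ being constant in $i$ is equivalent to $N(\xi) 1_v \propto 1_v$). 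Hence a Proposition \ref{pEopt} design saturates both inequalities simultaneously and attains the minimum of $1_v^T N^{-1}(\xi) 1_v$, which establishes both claims.

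I do not foresee a substantial obstacle. The argument is a short chaining of two elementary inequalities; the only step requiring care is verifying that the two equality cases align precisely with conditions (i) and (ii) of Proposition \ref{pEopt}, and this verification reduces to the explicit identity $(N(\xi) 1_v)_i = \mu_{0i}(\xi)/q$ obtained from \eqref{eInfMat}.
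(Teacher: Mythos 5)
Your proposal is correct, but it reaches the result by a genuinely different route than the paper. Both arguments reduce the theorem to minimizing $1_v^T N^{-1}(\xi) 1_v$ over $D(v,d,q1_d)$, and both pass through the same intermediate bound $1_v^T N^{-1}(\xi)1_v \geq qv^2/\sum_i \mu_{0i}(\xi)$, but they obtain it differently. The paper symmetrizes: it averages $\bar{N}(\xi)=\frac{1}{v!}\sum_\pi P_\pi N(\xi)P_\pi^T$ over permutations of the test treatments, identifies $\lambda_{\min}(\bar{N})=\sum_i\mu_{0i}/(qv)$ with eigenvector $1_v$ via Lemma 2.1 of \cite{MajumdarNotz}, and invokes convexity of $N\mapsto N^{-1}$ together with $P_\pi 1_v = 1_v$; it then cites the proof of Theorem 3.1 of \cite{MajumdarNotz} for the facts that the Proposition \ref{pEopt} design maximizes $\sum_i\mu_{0i}(\xi)$ and has $1_v$ as an eigenvector of $N(\xi^*)$. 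You replace all of this machinery by the Cauchy--Schwarz inequality $(1_v^T 1_v)^2 \leq \bigl(1_v^T N 1_v\bigr)\bigl(1_v^T N^{-1} 1_v\bigr)$, valid for any feasible (hence positive definite) $N(\xi)$, combined with the explicit identity $(N(\xi)1_v)_i = \mu_{0i}(\xi)/q$ and the elementary integer bound $a(q-a)\leq\lfloor q^2/4\rfloor$; your computations ($Q1_v/v=\tilde{c}$, $\sum_i\mu_{0i}=\sum_k\xi(0,k)(q-\xi(0,k))$, and the equality cases) all check out. Your version is more self-contained --- you prove, rather than import, the maximality of $\sum_i\mu_{0i}$, and you make transparent that conditions (i) and (ii) of Proposition \ref{pEopt} are precisely the equality cases of the two inequalities (only the ``if'' direction of each equality condition is actually needed for attainment, and that is the direction you use). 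What the paper's symmetrization buys in exchange is a reusable template: the same $\bar{N}$ construction carries over to the $R$-optimality proof of Theorem \ref{tRopt}, where no Cauchy--Schwarz shortcut is available. Two cosmetic points: your appeal to Theorem \ref{tEoptcopt}(i)--(ii) for the first claim is unnecessary and slightly misplaced, since that theorem concerns $E$-optimal \emph{approximate} designs --- the identity $\tilde{c}^T M^-(\xi)\tilde{c}=v^{-2}\,1_v^T N^{-1}(\xi)1_v$, which you state anyway, already gives the reduction; and entrywise non-negativity of $N^{-1}$ is needed only for $\xi^*$ (for a competitor $\xi$, the sum of variances plus absolute covariances dominates $1_v^T N^{-1}(\xi)1_v$ up to $\sigma^2$ regardless of signs), though your stronger claim for all feasible designs is also true by the same M-matrix argument the paper uses.
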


\begin{proof}
	The technique of the proof is analogous to that of Theorem 3.2 of \cite{Notz}.
	Let $\xi^*$ be given by Proposition \ref{pEopt}, let $\xi \in D(v,d,q1_d)$ and let us denote 
	$$\bar{N}(\xi) = \frac{1}{v!} \sum_{\pi-\text{perm.}} P_\pi N( \xi)P_\pi^T ,$$
	where the sum is over all permutations $\pi$ of the $v$ test treatments and $P_\pi$ is the permutation matrix for $\pi$. Then, 
	$$\begin{aligned}\bar{N}(\xi) 
	= &\frac{1}{v}\left( \sum_{i=1}^v r_i - \frac{1}{m}\sum_{i=1}^v\sum_{k=1}^d \xi^2(i,k) + \frac{1}{q(v-1)} \sum_{1 \leq i \neq j \leq v} \mu_{ij} \right) I_v \\
	&- \frac{1}{qv(v-1)} \left( \sum_{1 \leq i \neq j \leq v} \mu_{ij}  \right) J_v,\end{aligned}$$
	and its smallest eigenvalue
	$\lambda_{\min}(\bar{N}) = \sum_{i=1}^v \mu_{0i}/(qv)$ corresponds to the eigenvector $1_v$, see Lemma 2.1 by \cite{MajumdarNotz}. Convexity of $N\mapsto N^{-1}$ yields $\bar{N}^{-1}(\xi) \preceq (1/v!) \sum_{\pi} P_\pi N^{-1}(\xi)P_\pi^T$. Therefore, using the fact that $P_\pi 1_v = 1_v$,
	$$ 1_v^T N^{-1}(\xi) 1_v = \frac{1}{v!} \sum_{\pi} 1_v^T P_\pi N^{-1}(\xi)P_\pi^T 1_v \geq 1_v^T \bar{N}^{-1}(\xi) 1_v = \frac{qv^2}{\sum_i\mu_{0i}(\xi)}.$$
	In the proof of Theorem 3.1, \cite{MajumdarNotz} show that $\xi^*$ maximizes $\sum_{i=1}^v \mu_{0i}(\xi)/(qv)$ and that $1_v$ is an eigenvector corresponding to the smallest eigenvalue of $N(\xi^*)$, which is $\mu_{01}(\xi^*)/q$. Hence,
	$$ 1_v^T N^{-1}(\xi) 1_v \geq \frac{qv^2}{\sum_i\mu_{0i}(\xi)} \geq \frac{qv^2}{\sum_i\mu_{0i}(\xi^*)} = \frac{qv}{\mu_{01}(\xi^*)} = 1_v^T N^{-1}(\xi^*) 1_v,$$ 
	because $\mu_{01}(\xi^*) = \ldots = \mu_{0v}(\xi^*)$. The matrix $N^{-1}(\xi^*)$ is non-negative by an argument analogous to the approximate case.
\end{proof}

One example of $E$-optimal design $\xi$ for $v=4$ and $m=4\cdot1_4$ provided by \cite{MajumdarNotz}, which Theorem \ref{EoptVarCovSum} shows to be optimal with respect to the sum of variances and covariances \eqref{eVarCovSum}, is given by
$$
X(\xi)=\begin{pmatrix}
2 & 2 & 2 & 2 \\
1 & 0 & 0 & 1 \\
1 & 1 & 0 & 0 \\
0 & 1 & 1 & 0 \\
0 & 0 & 0 & 1
\end{pmatrix}.
$$

Note that all $E$-optimal designs given by \cite{MorganWang} minimize \eqref{eVarCovSum} too, because they form a subset of the designs given by Proposition \ref{pEopt}: the designs given by \cite{MorganWang} must satisfy, among other conditions, that $\xi(0,k)=\lfloor q/2 \rfloor$ for all $k$, and they are equally replicated and binary in the test treatments. It follows that $\mu_{0j}(\xi) = \sum_k \xi(0,k) \xi(j,k) = \lfloor q/2 \rfloor \sum_k \xi(j,k)$, which does not depend on $j$ because $\xi$ is equireplicated in the test treatments. Clearly, such designs satisfy the conditions of Proposition \ref{pEopt}.

\section{$R$-optimality}

\subsection{Approximate designs}

An $R$-optimal design minimizes
$$\Psi_R(N(\xi)) = \prod_{i=1}^s (N^{-1}(\xi))_{ii}.$$
Equivalently, an $R$-optimal design maximizes the concave function
$$\Phi_R(N(\xi)) = \Big(\prod_{i=1}^v (N^{-1}(\xi))_{ii}\Big)^{-1/v}.$$
Note that $R$-optimality is permutationally invariant; i.e. $\Phi_R(PNP^T)=\Phi_R(N)$ for any $v \times v$ permutation matrix $P$.

Due to the symmetric nature of the studied problem, the thoroughly studied $A$-optimal designs tend to be $R$-optimal. In fact, we show that any $A$-optimal approximate block design for treatment-control comparisons is also $R$-optimal. An approximate block design $\xi$ is $A$-optimal for treatment-control comparisons if and only if $\xi^*=r^*\otimes s$, where
\begin{equation}\label{eAopt}
r^*_0 = \frac{1}{\sqrt{v}+1} \quad\text{and}\quad r^*_i =\frac{1}{\sqrt{v}(\sqrt{v}+1)} \text{ for } i>0,
\end{equation}
(see \cite{RosaHarman16}, \cite{GiovagnoliWynn}).

\begin{theorem}
	Let $\xi^*$ be an $A$-optimal approximate block design for comparisons with a control, i.e., $\xi^*=r^* \otimes s$, where $r^*$ is given by \eqref{eAopt}. Then, $\xi$ is also $R$-optimal.
\end{theorem}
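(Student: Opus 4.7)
The plan is to argue by symmetrization in the test-treatment labels: because $\Phi_R$ is permutationally invariant and concave while $N(\xi)$ is concave in $\xi$ for a fixed block-size vector $s$, the $R$-optimality question reduces to the subclass of designs invariant under permutations of $\{1,\ldots,v\}$; on that subclass $\Phi_R$ collapses to a monotone transform of the $A$-criterion, and the conclusion follows from the $A$-optimality of $\xi^*$.

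More concretely, fix $s$ and, for a permutation $\pi$ of $\{1,\ldots,v\}$, let $\pi\xi$ denote the design obtained from $\xi$ by permuting rows $1,\ldots,v$ of $X(\xi)$ according to $\pi$; this leaves $s$ unchanged. Set $S(\xi) = (1/v!)\sum_\pi \pi\xi$. I would first verify from \eqref{eInfMat}, by writing $N(\xi) = \diag(r) - \sum_{k=1}^d s_k^{-1} z_k z_k^T$ with $z_k$ the $k$-th column of $Z(\xi)$, that $\xi \mapsto N(\xi)$ is Loewner concave. Loewner monotonicity of $\Phi_R$ then follows from $(N_1^{-1})_{ii}\leq (N_2^{-1})_{ii}$ for every $i$ whenever $N_1\succeq N_2\succ 0$. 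Combining these with the permutation invariance and concavity of $\Phi_R$ gives
\begin{equation*}
\Phi_R(N(S(\xi))) \geq \Phi_R\Bigl(\tfrac{1}{v!}\sum_\pi P_\pi N(\xi) P_\pi^T\Bigr) \geq \tfrac{1}{v!}\sum_\pi \Phi_R(P_\pi N(\xi) P_\pi^T) = \Phi_R(N(\xi)),
\end{equation*}
so attention may be restricted to designs invariant under test-treatment permutations.

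For any such symmetric design $\xi$, $N(\xi)$ commutes with every $P_\pi$ and must therefore take the completely symmetric form $(a-b)I_v + bJ_v$. All diagonal entries of $N^{-1}(\xi)$ are then equal to $\tr(N^{-1}(\xi))/v$, so $\prod_{i=1}^v (N^{-1}(\xi))_{ii} = (\tr(N^{-1}(\xi))/v)^v$ and consequently $\Phi_R(N(\xi)) = v/\tr(N^{-1}(\xi))$ on the symmetric subclass. Since $\xi^* = r^*\otimes s$ is itself invariant under permutations of the test treatments, the same identity holds at $\xi^*$, and for any $\xi$
\begin{equation*}
\Phi_R(N(\xi)) \leq \Phi_R(N(S(\xi))) = \frac{v}{\tr(N^{-1}(S(\xi)))} \leq \frac{v}{\tr(N^{-1}(\xi^*))} = \Phi_R(N(\xi^*)),
\end{equation*}
where the final inequality is precisely the $A$-optimality of $\xi^*$.

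The main obstacle I anticipate is the opening symmetrization step: concavity of $\Phi_R$ is quoted from the statement, but one still needs Loewner monotonicity of $\Phi_R$ together with Loewner concavity of $N(\cdot)$ in $\xi$ for fixed $s$, and a brief check that $S(\xi)$ remains a feasible approximate block design with the original block-size vector. Once symmetry has been enforced, the collapse of $\Phi_R$ to $v/\tr(N^{-1})$ on the symmetric subclass and the appeal to $A$-optimality of $\xi^*$ are essentially immediate.
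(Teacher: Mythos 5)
Your proof is correct and follows essentially the same route as the paper: symmetrize over permutations of the test-treatment labels using the permutation invariance and concavity of $\Phi_R$, observe that on the completely symmetric class $\Phi_R(N)=v/\tr(N^{-1})$ so that $R$-optimality collapses to the $A$-criterion, and invoke the $A$-optimality of $\xi^*$. If anything, your version is slightly more careful than the paper's: you symmetrize the design itself and bridge $N(S(\xi))$ with the matrix average $\frac{1}{v!}\sum_\pi P_\pi N(\xi)P_\pi^T$ via Loewner concavity of $N(\cdot)$ and Loewner monotonicity of $\Phi_R$, a step the paper leaves implicit by averaging the information matrices directly and treating the average as if it were a design.
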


\begin{proof}
	Let $\xi_0$ be a feasible block design and let $\pi$ be a permutation of the $v$ test treatments. We define $P_\pi\xi_0$ as the design obtained from $\xi_0$ by the permutation $\pi$ of the test treatment labels. Then, $N(P_\pi \xi_0) = P_\pi N(\xi_0) P_\pi^T$. Let us denote
	$$\bar{\xi}_0 = \frac{1}{v!} \sum_{\pi-\text{perm.}}  N( P_\pi \xi_0)  = \frac{1}{v!} \sum_{\pi-\text{perm.}} P_\pi N(\xi_0) P_\pi^T,$$
	where the sum is over all permutations $\pi$ of the test treatments. Then, from the permutation invariance and the concavity of $\Phi_R$, we have $\Phi_R(\bar{\xi}_0) \leq \Phi_R(\xi_0)$. For any design $\xi$ equireplicated in the test treatments, its information matrix $N$ is completely symmetric, and therefore the diagonal elements of $N^{-1}$ coincide. Let us denote an arbitrary diagonal element of $N^{-1}$ for such $N$ as $g(N)$. Then, $\xi^*$ minimizes $\Psi_A(\xi) = vg(N(\xi))$ among all designs equireplicated in test treatments; thus, it maximizes $(g(N(\xi)))^{-1} = \Phi_R(\xi)$. It follows that $\Phi_R(\xi^*) \geq \Phi_R(\bar{\xi}_0)$, which together with $\Phi_R(\bar{\xi}_0) \geq \Phi_R(\xi_0)$ yields the $R$-optimality of $\xi^*$.
\end{proof}

\subsection{Exact designs}

\cite{MajumdarNotz} provided a class of BTIB designs that are $A$-optimal for comparisons with a control among all block designs (Theorem 2.2 therein).
Analogously to the approximate case, we show that any such $A$-optimal design is also $R$-optimal; thus extending the relationship between $A$- and $R$-optimality to exact designs. 

\begin{theorem}\label{tRopt}
	Let $\xi^* \in D(v,d,q1_d)$ be an $A$-optimal BTIB design for comparisons with a control given by Theorem 2.2 of \cite{MajumdarNotz}:
	(i) $\xi^*$ is binary in test treatments, (ii) $\sum_k \xi(0,k) = R$, (iii) $\xi(0,k)=\lfloor R/d \rfloor$ or $\xi(0,k)=\lfloor R/d \rfloor + 1$ for $1 \leq k \leq d$, where $R$ is the value of the integer $r$ $(0\leq r \leq \lfloor dq/2\rfloor)$ that minimizes
	$$ g(r;d,q,v) = \frac{v}{r-h(r;d)/q} + \frac{(v-1)^2}{d(q-1)-r(q-1)/q - v(r-h(r;d)/q)},$$
	where
	$$h(r;d)=\lfloor r/d \rfloor^2 \left(d+d\lfloor r/d \rfloor - r\right) + \left(r-d\lfloor r/d \rfloor \right) \left(r/d + 1 \right)^2.$$
	Then, $\xi^*$ is also $R$-optimal for comparisons with a control in $D(v,d,q1_d)$.
\end{theorem}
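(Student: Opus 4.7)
The plan is to mimic the symmetrization argument from the approximate case but, because symmetrizing an exact design does not in general yield an exact design, to lean on the internal structure of the Majumdar--Notz proof rather than on $A$-optimality of $\xi^*$ as a black box.

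First I would fix an arbitrary feasible $\xi_0 \in D(v,d,q1_d)$ and form the symmetrized matrix
$$\bar{N}(\xi_0) = \frac{1}{v!}\sum_\pi P_\pi N(\xi_0) P_\pi^T,$$
where the sum runs over permutations $\pi$ of the $v$ test treatments. This matrix is completely symmetric, and by the concavity and permutation invariance of $\Phi_R$ one obtains $\Phi_R(N(\xi_0)) \leq \Phi_R(\bar{N}(\xi_0))$. Since $\xi^*$ is BTIB, $N(\xi^*)$ is also completely symmetric, so both $\bar{N}(\xi_0)$ and $N(\xi^*)$ have inverses with constant diagonal. For any completely symmetric positive-definite $v \times v$ matrix $N$, $\Phi_R(N) = v/\tr(N^{-1})$, whence the inequality $\Phi_R(\bar{N}(\xi_0)) \leq \Phi_R(N(\xi^*))$ is equivalent to
$$\tr(\bar{N}^{-1}(\xi_0)) \geq \tr(N^{-1}(\xi^*)).$$

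This last inequality is precisely what the \cite{MajumdarNotz} proof of Theorem 2.2 delivers as an intermediate step: for every $\xi \in D(v,d,q1_d)$, their argument yields the chain
$$\tr(N^{-1}(\xi)) \geq \tr(\bar{N}^{-1}(\xi)) \geq g(r_0(\xi); d, q, v) \geq g(R; d, q, v) = \tr(N^{-1}(\xi^*)),$$
where $r_0(\xi) = \sum_k \xi(0,k)$. The first inequality is convexity of $N \mapsto \tr(N^{-1})$; the second uses the eigenvalue computation for the completely symmetric $\bar{N}(\xi)$ (Lemma 2.1 of \cite{MajumdarNotz}) together with the bound $\sum_k \xi^2(0,k) \geq h(r_0(\xi); d)$; the third is the integer minimization defining $R$; and the final equality holds because $\xi^*$ attains equality at each preceding step, being BTIB, binary in the test treatments, and with its control replications as equal as possible across the blocks. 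Setting $\xi = \xi_0$ then gives the required bound.

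The main obstacle is precisely this middle inequality $\tr(\bar{N}^{-1}(\xi_0)) \geq \tr(N^{-1}(\xi^*))$: it is not a consequence of $A$-optimality alone, which only supplies $\tr(N^{-1}(\xi_0)) \geq \tr(N^{-1}(\xi^*))$, whereas the convexity bound $\tr(\bar{N}^{-1}(\xi_0)) \leq \tr(N^{-1}(\xi_0))$ points in the opposite direction. One therefore has to peek inside the Majumdar--Notz argument, which is already organized around bounds on the symmetrized information matrix, in order to close the gap; this ties the proof to the specific structure of their analysis rather than to the black-box statement of $A$-optimality.
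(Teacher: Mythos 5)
Your proposal is correct and follows essentially the same route as the paper: symmetrize $N(\xi_0)$ over permutations of the test treatments, use concavity and permutation invariance of $\Phi_R$, observe that $\Phi_R$ reduces to the reciprocal of the (constant) diagonal of the inverse for completely symmetric matrices, and then invoke the internal Majumdar--Notz bound showing $\xi^*$ minimizes the $A$-criterion of the symmetrized matrix $\bar{N}(\xi)$ over all designs. Your explicit observation that black-box $A$-optimality is insufficient (since convexity gives $\tr(\bar{N}^{-1}(\xi_0)) \leq \tr(N^{-1}(\xi_0))$, the wrong direction) is exactly the subtlety the paper handles by citing the proof, rather than the statement, of Theorem 2.1 of \cite{MajumdarNotz}; your unpacking of their chain through $g(r;d,q,v)$ and $h(r;d)$ is the same content made explicit.
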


\begin{proof}
	Let $\xi$ be a block design and let $\bar{N}(\xi)$ be defined as in the proof of Theorem \ref{EoptVarCovSum}. Then, $\Phi_R(\bar{N}(\xi)) \geq \Phi_R(N(\xi))$.
	Because $N(\xi^*)$ is completely symmetric, we have $\bar{N}(\xi^*) = N(\xi^*)$. From the proof of Theorem 2.1 of \cite{MajumdarNotz} it follows that $\xi^*$ minimizes $\Psi_A(\bar{N}(\xi))$ over all block designs $\xi$. For a completely symmetric matrix $N$, the diagonal elements of $N^{-1}$ coincide; let us denote an arbitrary diagonal element of $N^{-1}$ for such $N$ as $t(N)$. Then, $\xi^*$ minimizes $\Psi_A(\bar{N}(\xi)) = st(\bar{N}(\xi))$; thus, it maximizes $(t(\bar{N}(\xi)))^{-1} = \Phi_R(\bar{N}(\xi))$ over all block designs $\xi$. Therefore, $\Phi_R(N(\xi^*))=\Phi_R(\bar{N}(\xi^*))\geq \Phi_R(\bar{N}(\xi)) \geq \Phi_R(N(\xi))$.
\end{proof}

Multiple examples and entire families of designs satisfying Theorem \ref{tRopt} can be found in \cite{HedayatEA}.

\section{Discussion}

We provided the class of all $E$-optimal approximate block designs for comparisons with a control, which can be described by the simple linear constraints \eqref{eEopt}. For a strictly convex criterion, the only optimal block designs are product designs with optimal treatment proportions (see \cite{RosaHarman16}). However, since $E$-optimality lacks strict convexity, the class of $E$-optimal designs is richer and thus allows for an easier construction of efficient (or even optimal) exact designs by the rounding methods -- therefore increasing the usefulness of the approximate theory. We demonstrated this by easily constructing a class of $E$-optimal exact designs, which extends the known results on $E$-optimality to the case of unequal block sizes.

$E$-optimality is often considered to be inappropriate for comparisons with a control, because of its lack of a natural statistical interpretation. However, we formulated such an interpretation in the examined experimental settings. Namely, we proved that the $E$-optimal approximate designs minimize the sum of all variances and the absolute values of covariances for the comparisons with a control \eqref{eVarCovSum}; compare with $A$-optimality, which seeks to minimize the sum of variances for the $(\tau_i-\tau_0)$'s.  Moreover, we showed that the $E$-optimal exact designs provided in this paper as well as all $E$-optimal exact designs provided by \cite{MajumdarNotz} also have this statistical interpretation.
The provided interpretation suggests that $E$-optimality is a reasonable criterion for treatment-control comparisons. In fact, $E$-optimality has the advantage of taking the covariances for the ($\tau_i-\tau_0$)'s into consideration, unlike the $A$- and $MV$-optimality, which were criticized for their `blindness' to covariances (see \cite{GiovagnoliVerdinelli}, \cite{BechhoferTamhane88}).

We also provided an additional justification for using $A$-optimal block designs for comparisons with a control by showing that wide classes of these designs are also $R$-optimal. Specifically, we prove that any $A$-optimal approximate block design for treatment-control comparisons is $R$-optimal. We then extend this relationship to exact designs by proving that any $A$-optimal exact design given by \cite{MajumdarNotz} is also $R$-optimal. Therefore, not only are $A$-optimal proportions efficient for the rectangular confidence regions based on the multivariate $t$-distribution (cf. \cite{BechhoferTamhane83}), but also the $A$-optimal block designs are often actually in some sense optimal for the rectangular Bonferroni confidence regions. Therefore, even if one is mostly interested in the rectangular confidence regions, one may prefer to use the $A$-optimal block designs, which perform `well enough' for such regions, because they are easier to obtain than the optimal designs for the confidence intervals based on the multivariate $t$-distributions.

\bibliographystyle{plainnat}
\bibliography{C:/BibTeX/rosa.bib}

\end{document}